 \newtheorem{thm}{Theorem}[section]
 \newtheorem*{thm*}{Theorem}
 \newtheorem{cor}[thm]{Corollary}
 \newtheorem{lem}[thm]{Lemma}
 \newtheorem{prop}[thm]{Proposition}
 \theoremstyle{definition}
 \theoremstyle{remark}
 \newtheorem{rem}[thm]{Remark}
 \newtheorem*{conj}{Conjecture}
 \numberwithin{equation}{section}
\newcommand{\vertiii}[1]{{\left\vert\kern-0.25ex\left\vert\kern-0.25ex\left\vert #1
  \right\vert\kern-0.25ex\right\vert\kern-0.25ex\right\vert}}
\newcommand{\im}{\operatorname{Im}}
\newcommand{\re}{\operatorname{Re}}
\newcommand{\VERT}[1]{{\left\vert\kern-0.25ex\left\vert\kern-0.25ex\left\vert #1 
    \right\vert\kern-0.25ex\right\vert\kern-0.25ex\right\vert}}
\begin{document}
\title{Essential positivity for Toeplitz operators on the Fock space}
\author{Robert Fulsche}

\maketitle
\begin{abstract}
In this short note, we discuss essential positivity of Toeplitz operators on the Fock space, as motivated by a recent question of Per\"{a}l\"{a} and Virtanen \cite{Perala_Virtanen2023}. We give a proper characterization of essential positivity in terms of limit operators. A conjectured characterization of essential positivity of Per\"{a}l\"{a} and Virtanen is disproven when the assumption of radiality is dropped. Nevertheless, when the symbol of the Toeplitz operator is of vanishing mean oscillation, we show that the conjecture of Per\"{a}l\"{a} and Virtanen holds true, even without radiality.

\medskip
\textbf{AMS subject classification:} Primary 47B35; Secondary 47B35, 47B65

\medskip
\textbf{Keywords:} Toeplitz operators, Fock space, essential positivity
\end{abstract}

\section{Introduction}

In their recent note \cite{Perala_Virtanen2023}, Per\"{a}l\"{a} and Virtanen study the notion of \emph{essential positivity} and describe a class of Toeplitz operators which are essentially positive. A bounded self-adjoint operator $A$ on the Hilbert space $\mathcal H$ is said to be essentially positive provided $\sigma_{ess}(A) \subseteq [0, \infty)$. Here, $\sigma_{ess}(A)$ is the essential spectrum:
\begin{align*}
\sigma_{ess}(A) = \{ \lambda \in \mathbb C: ~A-\lambda \text{ is not Fredholm}\}.
\end{align*}
After discussing some characterizations of essential positivity (\cite[Theorem 3]{Perala_Virtanen2023}), they go on to characterize essential positivity of certain Toeplitz operators on the Hardy space and the Bergman space of the disc. Their result on the Bergman space of the disc is the following:
\begin{thm*}[{\cite[Theorem 9]{Perala_Virtanen2023}}]
Let $\mu$ be a radial real-valued Borel measure on $\mathbb D$ such that $|\mu|$ is a Carleson measure for the Bergman space $A^2(\mathbb D)$. Suppose that the limit $L = \lim_{|z| \to 1} \tilde{\mu}(z)$ exists. Then, $T_\mu$ is essentially positive on $A^2(\mathbb D)$ if and only if $L \geq 0$.
\end{thm*}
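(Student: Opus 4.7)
The plan is to exploit radiality to diagonalize $T_\mu$, reformulate essential positivity as an asymptotic condition on the eigenvalues of $T_\mu$, and relate those asymptotics to the boundary behaviour of $\tilde\mu$ via a Tauberian-type argument.

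Since $\mu$ is radial, $T_\mu$ commutes with rotations on $A^2(\mathbb{D})$ and is therefore diagonal in the monomial orthonormal basis $e_n(z)=\sqrt{n+1}\,z^n$, with
\[
\lambda_n := \langle T_\mu e_n,e_n\rangle = 2(n+1)\int_0^1 r^{2n}\,d\nu(r),
\]
where $\nu$ is the radial part of $\mu$; the Carleson hypothesis forces $(\lambda_n)$ to be bounded. The essential spectrum of a bounded self-adjoint diagonal operator is the set of accumulation points of its diagonal sequence, so $T_\mu$ is essentially positive if and only if $\liminf_n \lambda_n\ge 0$. Expanding the normalized reproducing kernel $k_z$ in the same basis gives the identity
\[
\tilde\mu(z)=(1-|z|^2)^2\sum_{n\ge 0}(n+1)\lambda_n |z|^{2n},
\]
so the hypothesis on $\tilde\mu$ reads $(1-x)^2\sum_n(n+1)\lambda_n x^n\to L$ as $x=|z|^2\to 1^-$.

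For the ``$\Rightarrow$'' direction, $\sigma_{\mathrm{ess}}(T_\mu)\subseteq[0,\infty)$ combined with the spectral theorem gives $T_\mu=A+K$ with $A\ge 0$ self-adjoint and $K$ self-adjoint compact (the negative part of $T_\mu$ has only isolated eigenvalues of finite multiplicity accumulating at most at $0$, hence is compact). Since $k_z\rightharpoonup 0$ as $|z|\to 1$, the compact term vanishes in the limit, and
\[
\tilde\mu(z)=\langle Ak_z,k_z\rangle+\langle Kk_z,k_z\rangle\ge\langle Kk_z,k_z\rangle\longrightarrow 0,
\]
so $L\ge 0$.

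For the reverse direction the task reduces to the Tauberian implication $(1-x)^2\sum_n(n+1)\lambda_n x^n\to L \Longrightarrow \lambda_n\to L$; once this is established, $\liminf_n \lambda_n=L\ge 0$ and $T_\mu$ is essentially positive. Substituting $s=r^2$, the sequence $\lambda_n/(n+1)$ is the Hausdorff moment sequence of a signed Borel measure on $[0,1]$, and it is this moment representation that supplies the Tauberian regularity needed---for positive measures the implication is a classical Karamata/Hardy--Littlewood result, while the signed case is approached via the Jordan decomposition $\mu=\mu^+-\mu^-$ or by invoking the known radial equivalence $\tilde f(z)\to L\iff \lambda_n(f)\to L$ in the spirit of the Axler--Zheng / Korenblum--Zhu compactness criteria. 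The main obstacle is precisely this Tauberian step in the signed case: boundary convergence of $\widetilde{\mu^+}-\widetilde{\mu^-}$ does not entail boundary convergence of each summand, so reducing to the positive case by a naive splitting is not immediate, and preserving enough information about $L$ through such a decomposition is where the real work is concentrated.
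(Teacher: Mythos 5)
The paper does not actually prove this theorem: it is quoted verbatim from Per\"al\"a and Virtanen \cite{Perala_Virtanen2023}, and Fulsche only records that their argument ``makes crucial use of a certain Tauberian theorem.'' Your proposal correctly reconstructs that Tauberian skeleton --- diagonalization of $T_\mu$ by radiality, identification of essential positivity with $\liminf_n\lambda_n\ge0$ for a bounded self-adjoint diagonal operator, and reduction of the converse implication to the Abel-to-sequence statement $(1-x)^2\sum(n+1)\lambda_n x^n\to L\Rightarrow\lambda_n\to L$. The forward direction, via the Weyl decomposition $T_\mu=A+K$ with $A\ge0$ and $K$ self-adjoint compact (the negative part of $T_\mu$ consisting of isolated finite-multiplicity eigenvalues accumulating only at $0$), together with $k_z\rightharpoonup 0$ as $|z|\to1$, is complete and correct. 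The genuine gap is exactly where you flag it: the Tauberian implication for a \emph{signed} Hausdorff moment sequence is not supplied, and the naive Jordan splitting $\mu=\mu^+-\mu^-$ does not yield separate boundary convergence of $\widetilde{\mu^+}$ and $\widetilde{\mu^-}$, so the reduction to the positive Karamata case does not go through. As written the converse direction is set up but not proved.

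It is worth emphasizing, since it is precisely the viewpoint this paper is advocating, that the hypothesis that $\lim_{|z|\to1}\tilde\mu(z)=L$ \emph{exists} (not merely a $\liminf$) is strong enough to bypass both the Tauberian theorem and the radiality assumption entirely. The Berezin transform of $T_\mu-LI$ vanishes at $\partial\mathbb D$, and for operators in the Toeplitz algebra of $A^2(\mathbb D)$ --- which $T_\mu$ belongs to when $|\mu|$ is Carleson --- vanishing of the Berezin transform is equivalent to compactness (Su\'arez). Hence $T_\mu-LI$ is compact, $\sigma_{ess}(T_\mu)=\{L\}$, and both implications are immediate. Fulsche makes exactly this observation in the Fock-space setting (existence of $\lim\widetilde A$ forces $A\in\mathcal K(\mathcal H)+\mathbb CI$) and notes that it removes radiality. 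Your proposal gestures at this through Axler--Zheng/Korenblum--Zhu but treats it as an alternative route to the Tauberian lemma rather than as the device that makes the Tauberian lemma unnecessary.
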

Further, Per\"{a}l\"{a} and Virtanen conjectured that essential positivity of Toeplitz operators on the Bergman space of the disc might in general by classified by the property that $\liminf_{|z|\to 1} \widetilde{f}(z) \geq 0$. More specifically, they suggested:
\begin{conj}
Let $f \in L^\infty(\mathbb D)$ be real-valued and radial. Then, $T_f: A^2(\mathbb D) \to A^2(\mathbb D)$ is essentially positive if and only if $\liminf_{|z| \to 1} \widetilde{f}(z) \geq 0$.
\end{conj}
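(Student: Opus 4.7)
The plan is to exploit radiality to diagonalize $T_f$, rewrite the Berezin transform as an Abel-type mean of the eigenvalues, and then establish the required Tauberian equivalence at the sequence level.

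Since $f$ is radial, the orthonormal basis $e_n(z) = \sqrt{n+1}\,z^n$ of $A^2(\mathbb{D})$ consists of eigenvectors of $T_f$, with eigenvalues $\lambda_n = \langle T_f e_n, e_n\rangle = (n+1)\int_0^1 f(\sqrt{s})\,s^n\,ds$. For a bounded diagonal self-adjoint operator, the essential spectrum coincides with the set of accumulation points of the diagonal, so essential positivity of $T_f$ is equivalent to $\liminf_{n\to\infty}\lambda_n \geq 0$. A short computation using the expansion $k_z = (1-|z|^2)\sum_n \overline{e_n(z)}\,e_n$ yields the Abel representation
\[
\widetilde{f}(z) = (1-|z|^2)^2 \sum_{n=0}^\infty (n+1)\,\lambda_n\,|z|^{2n} = \sum_{n=0}^\infty w_n(r)\,\lambda_n, \qquad r = |z|^2,
\]
where $w_n(r) = (1-r)^2(n+1)r^n$ is a probability weight on $\mathbb{N}$ whose mean and standard deviation are both of order $1/(1-r)$. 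The conjecture therefore reduces to the purely sequential statement $\liminf_n \lambda_n \geq 0 \iff \liminf_{r\to 1^-}\sum_n w_n(r)\lambda_n \geq 0$.

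The ``$\Rightarrow$'' direction is a soft tail estimate: if $\lambda_n \geq -\varepsilon$ for $n \geq N$, then $\sum_{n<N}w_n(r) \to 0$ as $r \to 1^-$ and the Abel mean is eventually at least $-\varepsilon - o(1)$. The converse is the heart of the problem and the main obstacle, because Abel-type averaging can in principle smooth out oscillations of a bounded sequence and produce a strictly larger liminf. The decisive input is that $\{\lambda_n\}$ is not arbitrary but has the form $\lambda_n = \int_0^1 g(s)(n+1)s^n\,ds$ for a bounded $g = f(\sqrt{\cdot})$, which should enforce a Tauberian regularity. My plan is to show that for any subsequence $\lambda_{n_k} \to \ell$ one can select $r_k \to 1^-$ (say $r_k^2 = n_k/(n_k+1)$) with $\widetilde{f}(r_k) \to \ell$, by controlling the weighted deviation $\sum_m w_m(r_k)(\lambda_m - \lambda_{n_k})$ through the $L^1$-distance on $[0,1]$ between the densities $(m+1)s^m$ and $(n_k+1)s^{n_k}$, which is small whenever $m$ and $n_k$ are of comparable size.

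The principal technical difficulty is precisely this slow-variation estimate for $\{\lambda_n\}$, which must be extracted solely from $\|g\|_\infty < \infty$: since the weight $w_m(r_k)$ has spread of the same order as its center, one cannot expect pointwise control of $|\lambda_m - \lambda_{n_k}|$, only an integrated version sufficient to dominate the contribution to the Abel average. A backup, should this direct comparison prove intractable, is to invoke classical Tauberian theorems for the generating function $\sum(n+1)\lambda_n r^n$ with the weight $(1-r)^2$, using the moment representation of $\lambda_n$ to supply the appropriate one-sided regularity condition.
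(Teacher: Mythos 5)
This statement is a \emph{conjecture} of Per\"al\"a and Virtanen which the paper quotes for context and does \emph{not} prove; in fact, the paper's main contribution concerning it is to argue that it is almost certainly false. Lemma~\ref{lem:nobound} and Proposition~\ref{prop:nopos} show that the direct analogue on the Fock space fails once radiality is dropped, and the concluding Remark observes that the same reasoning disproves the nonradial version on the Bergman space as well. Under the radiality hypothesis the paper explicitly leaves the conjecture open, writes that it ``would be surprising'' if it held, and isolates exactly what would refute it: a sequence of radial symbols $f_k$ with $\|T_{f_k}\|_{ess}/\limsup_{|w|\to\infty}|\widetilde{f_k}(w)|\to\infty$. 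So there is no proof in the paper to compare yours against, and a proof attempt runs against the paper's own analysis.

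The gap you flag in your plan is precisely the one the paper identifies as the obstruction. Your reduction is correct --- $\lambda_n=(n+1)\int_0^1 f(\sqrt s)\,s^n\,ds$, $\widetilde f(z)=\sum_n w_n(r)\lambda_n$ with $w_n(r)=(1-r)^2(n+1)r^n$, $r=|z|^2$ --- and the ``$\Rightarrow$'' direction is a routine Abel tail estimate. But for the Tauberian converse, the only regularity that $\|f\|_\infty<\infty$ imposes on the eigenvalue sequence is of order $|\lambda_n-\lambda_{n+1}|\lesssim\|f\|_\infty/n$ (the $L^1$-distance on $[0,1]$ between the densities $(n+1)s^n$ and $(n+2)s^{n+1}$ is $\sim 1/n$), whereas the probability weight $w_\cdot(r)$ has mean \emph{and} spread both of order $1/(1-r)$. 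Over the window where $w_\cdot(r)$ is concentrated the eigenvalues can therefore drift by $O(1)$, so the deviation $\sum_m w_m(r_k)(\lambda_m-\lambda_{n_k})$ has no reason to be small and the comparison you propose does not close. The fallback of classical Tauberian theorems does not help either: those require the Abel limit to \emph{exist}, not merely to have nonnegative $\liminf$, and that existence assumption is exactly the extra hypothesis of \cite[Theorem~9]{Perala_Virtanen2023} which the conjecture seeks to drop. What actually has to be decided --- and remains open --- is whether the failure of the bound $\|T_f\|_{ess}\leq M\limsup|\widetilde f|$, which the paper establishes via the Weyl-type symbols $h_z$, can be reproduced with radial symbols; this cannot be resolved by the soft slow-variation estimate you are hoping to extract from boundedness alone.
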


Per\"{a}l\"{a} and Virtanen make crucial use of a certain Tauberian theorem in the proof of their above theorem. At the end of their work, they ask if an analogous result holds true for Toeplitz operators on the Fock space, pointing out that a suitable substitute for the Tauberian theorem needs to be found.

In this short note, we explain how to obtain an analogous result on the Fock space through the methods of limit operators. We prove that their conjecture is (at least when the assumption of radiality is dropped) wrong on the Fock space, but it is true if the symbol is assumed to be of vanishing mean oscillation. We provide a general characterization of essential positivity in terms of limit operators.  As a special case, we obtain a result analogous to that of Per\"{a}l\"{a} and Virtanen but without the assumption of the symbol being radial.

\section{Essential positivity for Toeplitz operators on the Fock space}
On $\mathbb C^n$ we consider the family of probability measures $\mu_t$ given by
\begin{align*}
d\mu_t(z) = \frac{1}{(\pi t)^n} e^{-\frac{|z|^2}{t}}~dz,
\end{align*}
where $| \cdot |$ is the Euclidean norm, $dz$ the standard Lebesgue measure and $t > 0$ a fixed real number. We define the Fock space $F_t^2$ by
\begin{align*}
F_t^2 = L^2(\mathbb C^n, \mu_{t}) \cap \operatorname{Hol}(\mathbb C^n),
\end{align*}
where $\operatorname{Hol}(\mathbb C^n)$ denotes the entire functions on $\mathbb C^n$. The standard reference for the Fock space and its properties is \cite{Zhu2012}. This space is always endowed with its natural inner product, i.e.
\begin{align*}
\langle f, g\rangle = \int_{\mathbb C^n} f(z) \overline{g(z)}~d\mu_t(z).
\end{align*}
$F_t^2$ is well-known to be a reproducing kernel Hilbert space, the reproducing kernels being given by
\begin{align*}
K_z^t(w) = e^{\frac{w \cdot \overline{z}}{t}}.
\end{align*}
The normalized reproducing kernels are now defined as
\begin{align*}
k_z^t(w) = \frac{K_z^t(w)}{\| K_z^t\|_{F_t^2}} = e^{\frac{w \cdot \overline{z}}{t} - \frac{|z|^2}{2t}}.
\end{align*}
For $A \in \mathcal L(F_t^2)$, we define its \emph{Berezin transform} as the function $\widetilde{A}(z) = \langle Ak_z^t, k_z^t\rangle$, where $z \in \mathbb C^n$. Then, $\widetilde{A}$ is always a bounded and uniformly continuous function on $\mathbb C^n$.

We of course have the well-known orthogonal projection $P_t \in \mathcal L(L^2(\mathbb C^n, \mu_t))$ mapping onto $F_t^2$ by
\begin{align*}
P_t f(z) = \langle f, K_z^t\rangle = \int_{\mathbb C^n} f(w) e^{\frac{z \cdot \overline{w}}{t}} ~d\mu_t(w).
\end{align*}
For any $f \in L^\infty(\mathbb C^n)$ the Toeplitz operator $T_f^t$ given by
\begin{align*}
T_f^t: F_t^2 \to F_t^2, \quad T_f^t(g) = P_t (fg)
\end{align*}
is well defined and bounded, satisfying $\| T_f^t\| \leq \| f\|_\infty$. 

Given some signed Borel measure $\nu$ on $\mathbb C^n$, one defines the Toeplitz operator with symbol $\nu$ formally as
\begin{align*}
T_\nu^t f(z) = \frac{1}{(\pi t)^n} \int_{\mathbb C^n} f(w) e^{\frac{z \cdot \overline{w}}{t} - \frac{|w|^2}{t}}~d\nu(w).
\end{align*}
Of course, in general this expression is not well-defined. The class of positive measures which define in this way a bounded linear operator are well-understood. They are usually called \emph{Fock-Carleson measures} and characterized by the following properties:
\begin{thm*}[{\cite[Theorem 3.1]{Isralowitz_Zhu2010}}]
Let $\nu$ be a positive Borel measure on $\mathbb C^n$. Then, the following are equivalent:
\begin{enumerate}[(i)]
\item $T_\mu^t$ is bounded. 
\item For some (equivalently: every) $R > 0$ there exists a constant $C_R > 0$ such that $\nu(B(z,R)) \leq C_R$ for every $z \in \mathbb C^n$.
\item $\widetilde{\mu}(z) := \frac{1}{(\pi t)^n} \int_{\mathbb C^n} e^{-\frac{|z-w|^2}{t}}~d\nu(w)$ is a bounded function.
\end{enumerate}
When these properties are satisfied, $\widetilde{\mu}$ agrees with the Berezin transform of $T_\mu^t$, $\widetilde{\mu}(z) = \langle T_\mu^t k_z^t, k_z^t\rangle$, and $\| T_\mu^t\| \cong \|\widetilde{\mu}\|_\infty$.
\end{thm*}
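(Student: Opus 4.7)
The plan is to establish the three implications cyclically, (i)$\Rightarrow$(iii)$\Rightarrow$(ii)$\Rightarrow$(i), with the identity $\widetilde{\nu}(z) = \langle T_\nu^t k_z^t, k_z^t\rangle$ serving as the bridge between the operator and the measure-theoretic condition on $\nu$.

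\emph{Preliminary sesquilinear identity.} First, unfold the definition of $T_\nu^t$ and apply Fubini (legitimate for $f,g$ in a dense subspace such as polynomials, and retroactively justified once boundedness is proved). Using the reproducing property to collapse the inner integral over $z$ yields
\begin{align*}
\langle T_\nu^t f, g\rangle = \frac{1}{(\pi t)^n} \int_{\mathbb C^n} f(w)\overline{g(w)}\, e^{-|w|^2/t}~d\nu(w), \qquad f, g \in F_t^2.
\end{align*}
Specializing to $f = g = k_z^t$ and using the elementary identity $|k_z^t(w)|^2 e^{-|w|^2/t} = e^{-|z-w|^2/t}$ immediately gives $\widetilde{\nu}(z) = \langle T_\nu^t k_z^t, k_z^t\rangle$.

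\emph{The easy implications.} Since $\|k_z^t\|_{F_t^2} = 1$, property (i) yields $\|\widetilde{\nu}\|_\infty \leq \|T_\nu^t\|$, which proves (iii). For (iii)$\Rightarrow$(ii), restrict the integral defining $\widetilde{\nu}(z)$ to $B(z,R)$ and bound the Gaussian kernel below by $e^{-R^2/t}$ on that ball; this produces the uniform estimate $\nu(B(z,R)) \leq (\pi t)^n e^{R^2/t} \|\widetilde{\nu}\|_\infty$.

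\emph{Main step: (ii)$\Rightarrow$(i).} Applying Cauchy--Schwarz to the preliminary identity reduces boundedness of $T_\nu^t$ to the Carleson-type inequality
\begin{align*}
\int_{\mathbb C^n} |f(w)|^2 e^{-|w|^2/t}~d\nu(w) \leq C \|f\|_{F_t^2}^2, \qquad f \in F_t^2.
\end{align*}
For this, invoke the standard weighted sub-mean value estimate $|f(w)|^2 e^{-|w|^2/t} \leq M_R \int_{B(w,R)} |f|^2~d\mu_t$, which follows from subharmonicity of $|f|^2$ together with the comparability $e^{-|w|^2/t} \asymp e^{-|z|^2/t}$ on $B(w,R)$. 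Integrating against $d\nu(w)$, swapping the order of integration by Fubini, and using the uniform bound $\sup_z \nu(B(z,R)) \leq C_R$ from hypothesis (ii) then delivers the desired Carleson bound with constant proportional to $M_R C_R$.

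\emph{Expected obstacle.} The only nontrivial ingredient is the weighted sub-mean value estimate; it is classical, but one must track the dependence of constants on $R$ and $t$ carefully. Once that is in hand, the rest of the proof is a bookkeeping exercise combining Fubini, Cauchy--Schwarz, and hypothesis (ii).
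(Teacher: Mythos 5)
The paper does not actually prove this statement; it is cited verbatim from Isralowitz--Zhu \cite[Theorem 3.1]{Isralowitz_Zhu2010}, so there is no in-paper proof to compare against. Evaluating your argument on its own terms: the overall architecture (the sesquilinear identity, the easy implications (i)$\Rightarrow$(iii)$\Rightarrow$(ii), Cauchy--Schwarz reducing (ii)$\Rightarrow$(i) to a Carleson embedding, and the sub-mean-value estimate closing the loop) is sound and is indeed the standard route. However, there is one genuine error in the mechanism you offer for the key estimate.

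You justify the weighted sub-mean-value inequality $|f(w)|^2 e^{-|w|^2/t} \leq M_R \int_{B(w,R)} |f|^2\,d\mu_t$ by ``subharmonicity of $|f|^2$ together with the comparability $e^{-|w|^2/t} \asymp e^{-|z|^2/t}$ on $B(w,R)$.'' That comparability is \emph{false} uniformly in $w$: for $z \in \partial B(w,R)$ one has $\big| |z|^2 - |w|^2 \big|$ growing like $R|w|$, so the ratio $e^{-|z|^2/t}/e^{-|w|^2/t}$ can be as large as $e^{cR|w|/t}$, which is unbounded as $|w|\to\infty$. This is precisely the feature that distinguishes the Fock space from the Bergman space (where the weight $(1-|z|^2)^\alpha$ \emph{is} comparable on hyperbolic balls). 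The correct argument applies the sub-mean-value inequality not to $|f|^2$ but to the subharmonic function $\zeta \mapsto \big|f(\zeta)\,e^{-\zeta\cdot\overline{w}/t}\big|^2$; using the algebraic identity $-2\operatorname{Re}(\zeta\cdot\overline{w}) = |\zeta - w|^2 - |\zeta|^2 - |w|^2$, the exponential reorganizes as $e^{-|w|^2/t}\,e^{-|\zeta|^2/t}\,e^{|\zeta-w|^2/t}$, and now the surplus factor $e^{|\zeta-w|^2/t} \leq e^{R^2/t}$ \emph{is} uniformly bounded on $B(w,R)$. With this correction your proof goes through. Two minor loose ends you might tidy: you should note that the bound on $\nu(B(z,R_0))$ for \emph{some} $R_0$ implies the bound for every $R$ via a finite covering of $B(z,R)$ by balls of radius $R_0$ (this is what makes ``some'' equivalent to ``every'' in (ii)); and the two-sided norm comparability $\|T_\nu^t\| \cong \|\widetilde{\nu}\|_\infty$ should be extracted explicitly by tracking the constants through (i)$\Rightarrow$(iii) and (iii)$\Rightarrow$(ii)$\Rightarrow$(i).
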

Now, if $\nu$ is any signed Borel measure on $\mathbb C^n$, we can consider its Hahn-Jordan decomposition
\begin{align*}
\nu = \nu_+ - \nu_-
\end{align*}
and the total variation measure
\begin{align*}
|\nu| = \nu_+ + \nu_-.
\end{align*}
If one assumes that $|\nu|$ is a Carleson measure, then clearly both $\nu_+$ and $\nu_-$ satisfy (ii) in the above theorem. Hence, both $T_{\nu_+}^t$ and $T_{\nu_-}^t$ are bounded operators on $F_t^2$, which yields that their linear combination $T_\nu^t = T_{\nu_+}^t - T_{\nu_-}^t$ is also a bounded linear operator.

For $z \in \mathbb C^n$ we define the Weyl operator $W_z^t$ by
\begin{align*}
W_z^tg(w) = k_z^t(w) g(w-z). 
\end{align*}
Indeed, $W_z^t = T_{g_z^t}^t$ with
\begin{align*}
g_z^t(w) = e^{\frac{|z|^2}{2t} + \frac{2i \operatorname{Im}(w \cdot \overline{z})}{t}}.
\end{align*}
These operators satisfy the following well-known properties, which we fix as a lemma:
\begin{lem}
\begin{enumerate}[(1)]
\item $W_z^t$ is unitary on $F_t^2$;
\item $W_z^t W_w^t = e^{-\frac{\operatorname{Im}(z \cdot \overline w)}{t}}W_{z+w}^t$ for any $z, w \in \mathbb C^n$; 
\item $z \mapsto W_z^t$ is continuous in strong operator topology over $F_t^2$.
\end{enumerate}
\end{lem}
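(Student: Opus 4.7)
The plan is to verify each of the three claims by direct calculation, exploiting the explicit formula $W_z^t f(w) = k_z^t(w) f(w-z)$ together with the Gaussian weight that defines the Fock norm.

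For part (1), I first observe that $W_z^t$ sends $F_t^2$ into itself because $k_z^t$ is entire in $w$ and translation preserves holomorphy. To check isometry, I would expand
\begin{align*}
\|W_z^t f\|^2 = \frac{1}{(\pi t)^n}\int_{\mathbb{C}^n} |k_z^t(w)|^2\,|f(w-z)|^2 e^{-|w|^2/t}\,dw
\end{align*}
and substitute $w = u+z$. Using $|k_z^t(w)|^2 = e^{2\operatorname{Re}(w\cdot\overline{z})/t - |z|^2/t}$, the Gaussian weights combine to leave exactly $e^{-|u|^2/t}$, which gives $\|W_z^t f\| = \|f\|$. Surjectivity, hence unitarity, comes for free from part (2): specializing $w=-z$ in the composition formula produces $W_z^t W_{-z}^t = W_0^t = I$ (the cocycle factor is $1$ since $z\cdot\overline{(-z)} = -|z|^2$ is real), so $W_{-z}^t$ is the inverse.

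For part (2), the proof is a mechanical manipulation: from
\begin{align*}
W_z^t W_w^t f(v) = k_z^t(v)\, k_w^t(v-z)\, f(v-z-w)
\end{align*}
and
\begin{align*}
W_{z+w}^t f(v) = k_{z+w}^t(v)\, f(v-(z+w)),
\end{align*}
the ratio of the two exponential prefactors reduces, after applying the identity $|z+w|^2 = |z|^2+|w|^2+2\operatorname{Re}(z\cdot\overline{w})$, to the claimed unimodular scalar depending only on $z\cdot\overline{w}$.

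For part (3), unitarity from (1) gives the uniform bound $\|W_z^t\| = 1$, so it suffices to establish strong continuity on a total set and in fact only at $z_0 = 0$. For the reduction, I use (2) to write $(W_{z_0}^t)^{\ast}W_z^t$ as a unimodular scalar $\alpha(z,z_0)$, continuous in $z$ with $\alpha(z_0,z_0)=1$, multiplied by $W_{z-z_0}^t$, which transfers the problem to showing $W_u^t g \to g$ as $u \to 0$ for $g$ in a dense subspace. Taking $g = K_v^t$, I compute $\langle W_u^t K_v^t, K_v^t\rangle = (W_u^t K_v^t)(v)$, which is an explicit exponential continuous in $u$ at $u=0$ with value $\|K_v^t\|^2$; the polarization identity $\|W_u^t K_v^t - K_v^t\|^2 = 2\|K_v^t\|^2 - 2\operatorname{Re}\langle W_u^t K_v^t, K_v^t\rangle$ then closes the argument, and density of $\operatorname{span}\{K_v^t : v \in \mathbb{C}^n\}$ in $F_t^2$ extends it to all of $F_t^2$.

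The bulk of the work is bookkeeping: tracking the exponential factors in (1) and (2) without sign or phase errors, and verifying that the scalar $\alpha(z,z_0)$ in the reduction step of (3) really does tend to $1$ as $z \to z_0$. No genuine analytic obstacle arises, which is why the author records this as a well-known lemma.
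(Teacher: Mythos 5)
Your proof is correct. For parts (1) and (2) the paper gives only the remark that they follow from elementary computations, which is exactly what you carry out, and your calculations check out; note that the cocycle factor is in fact $e^{-i\operatorname{Im}(z\cdot\overline{w})/t}$, a unimodular scalar as you say, so the paper's displayed formula (which omits the $i$) has a typo — the factor must have modulus one since a product of unitaries can differ from another unitary only by a unimodular scalar. For part (3) the paper instead invokes Scheff\'e's lemma: since $W_{z}^t f(w)\to f(w)$ pointwise as $z\to 0$ (for $f$ entire) and $\|W_z^t f\|=\|f\|$ is constant, one concludes $\|W_z^t f - f\|\to 0$. Your alternative — reducing to $z_0=0$ via the cocycle identity and unitarity, computing $\langle W_u^t K_v^t, K_v^t\rangle$ explicitly from the reproducing property, using $\|W_u^t K_v^t - K_v^t\|^2 = 2\|K_v^t\|^2 - 2\operatorname{Re}\langle W_u^t K_v^t, K_v^t\rangle$ (valid because $W_u^t$ is an isometry), and then extending to all of $F_t^2$ by density of $\operatorname{span}\{K_v^t\}$ under the uniform bound $\|W_z^t\|=1$ — is a self-contained route that stays entirely inside reproducing-kernel bookkeeping and avoids any measure-theoretic convergence theorem. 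It is marginally longer but arguably more elementary; both arguments are standard and sound.
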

Note that (1) and (2) above follow from elementary computations, while (3) holds by Scheff\'{e}'s lemma.

It has been proven very fruitful in the study of Toeplitz operators on the Fock space to consider the shift action of $\mathbb C^n$ on operators, defined as $\alpha_z(A) = W_z A W_{-z}$ for $A \in \mathcal L(F_t^2)$ and $z \in \mathbb C^n$, cf.~\cite{Fulsche2020, Fulsche2024}. In particular, we can consider the $C^\ast$-algebra
\begin{align*}
\mathcal C_1 = \{ A \in \mathcal L(F_t^2): ~z \mapsto \alpha_z(A) \text{ is } \| \cdot\|_{op} \text{-cont.}\}.
\end{align*}
Then, we have the following important consequence:
\begin{thm}[{\cite[Theorem 3.1]{Fulsche2020}}]
The following equalities hold true:
\begin{align*}
\mathcal C_1 = C^\ast(\{ T_f^t: ~f \in L^\infty(\mathbb C^n)\}) = \overline{\{ T_f^t: ~f \in \operatorname{BUC}(\mathbb C^n)\}}.
\end{align*}
\end{thm}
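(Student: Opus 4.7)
The plan is to prove the cyclic chain of inclusions
\begin{align*}
\overline{\{T_f^t:f\in \operatorname{BUC}(\mathbb C^n)\}} \subseteq C^\ast(\{T_f^t:f\in L^\infty(\mathbb C^n)\}) \subseteq \mathcal C_1 \subseteq \overline{\{T_f^t:f\in \operatorname{BUC}(\mathbb C^n)\}}.
\end{align*}
The first inclusion is trivial, since $\operatorname{BUC}\subseteq L^\infty$ and $\{T_f^t:f\in L^\infty\}$ is a self-adjoint set. For the second, I would first observe that $\mathcal C_1$ is a norm-closed $\ast$-subalgebra of $\mathcal L(F_t^2)$: since $\alpha_z$ is a $\ast$-automorphism, operator-norm continuity of the orbit $z\mapsto \alpha_z(A)$ is stable under sums, products, adjoints and norm limits. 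It therefore suffices to show $T_f^t\in \mathcal C_1$ for every $f\in L^\infty$.

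For $f\in \operatorname{BUC}$ this is direct: using the definition of $W_z^t$ together with item (2) of the preceding lemma, and cleanly verified via the injectivity of the Berezin transform on $\mathcal L(F_t^2)$ and the computation $W_{-z}^t k_w^t = e^{i\operatorname{Im}(z\cdot\overline w)/t}\,k_{w-z}^t$, one obtains $\alpha_z(T_f^t) = T_{f(\cdot-z)}^t$; hence
\begin{align*}
\|\alpha_z(T_f^t)-T_f^t\| \leq \|f(\cdot-z)-f\|_\infty \xrightarrow{z\to 0} 0
\end{align*}
by uniform continuity, establishing $\overline{\{T_f^t:f\in \operatorname{BUC}\}}\subseteq \mathcal C_1$. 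For general $f\in L^\infty$, I would reduce to the $\operatorname{BUC}$ case by heat-kernel smoothing: $f_s := f\ast G_s \in \operatorname{BUC}$, and a standard Fock-space approximation argument (comparing operator norms through the Berezin transforms $\widetilde{T_f^t} = f\ast G_t$ and $\widetilde{T_{f_s}^t} = f\ast G_{t+s}$, which converge uniformly) yields $\|T_f^t - T_{f_s}^t\|\to 0$ as $s\to 0$. Therefore $T_f^t \in \overline{\{T_g^t:g\in \operatorname{BUC}\}}\subseteq \mathcal C_1$, and the second inclusion follows.

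The main content is the third inclusion, $\mathcal C_1 \subseteq \overline{\{T_f^t:f\in \operatorname{BUC}\}}$. Given $A\in \mathcal C_1$, I would form the Bochner integrals
\begin{align*}
A_s := \int_{\mathbb C^n} h_s(z)\,\alpha_z(A)\,dA(z)
\end{align*}
against a suitably calibrated Gaussian family $h_s$. Existence of the integral in $\mathcal L(F_t^2)$ is guaranteed precisely by the operator-norm continuity of $z\mapsto \alpha_z(A)$ built into $A\in \mathcal C_1$. The Berezin transform satisfies $\widetilde{A_s} = \widetilde A\ast h_s$, and combining the coherent-state resolution $I = \int (k_w^t\otimes k_w^t)\,dw/(\pi t)^n$ with the covariance $\alpha_z(k_0^t\otimes k_0^t) = k_z^t\otimes k_z^t$ should identify $A_s$ with a genuine Toeplitz operator $T^t_{\sigma_s}$ whose symbol $\sigma_s$, obtained from $\widetilde A$ by an explicit Gaussian (de)convolution dictated by $h_s$, lies in $\operatorname{BUC}$; uniqueness of $\sigma_s$ comes from injectivity of the Berezin transform on $\mathcal L(F_t^2)$. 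Norm convergence $A_s\to A$ then follows from $\|A_s - A\| \leq \int h_s(z)\|\alpha_z(A)-A\|\,dA(z)$ combined with $\lim_{z\to 0}\|\alpha_z(A)-A\|=0$, once the $h_s$ are normalized to form an approximate identity in the relevant sense.

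The hard part will be this identification --- converting the abstract integral of conjugates $W_z^t A W_{-z}^t$ into an honest Toeplitz operator with $\operatorname{BUC}$ symbol. It rests on a careful calculation in the Weyl/Berezin calculus on the Fock space and a delicate choice of Gaussian $h_s$ balancing the Toeplitz identification against the approximate-identity property needed for $A_s\to A$. The remaining pieces --- algebraic closure of $\mathcal C_1$, translation continuity for $\operatorname{BUC}$ symbols, the $L^\infty$ heat-kernel approximation, and the approximate-identity convergence --- are comparatively routine.
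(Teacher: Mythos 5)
The chain of inclusions is a sensible skeleton, the first inclusion is trivial, and the $\operatorname{BUC}$ case of the second inclusion (via $\alpha_z(T_f^t)=T_{f(\cdot-z)}^t$ and $\|T_g^t\|\le\|g\|_\infty$) is correct. However, two of your steps have genuine gaps.

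For the second inclusion in the $L^\infty$ case, you assert that uniform convergence of the Berezin transforms $\widetilde{T_f^t}=f*G_t$ and $\widetilde{T_{f_s}^t}=f*G_{t+s}$ yields $\|T_f^t-T_{f_s}^t\|\to 0$. This does not follow: the Berezin transform on $\mathcal L(F_t^2)$ is injective but not bounded below (compare $W_z^t$, whose Berezin transform has sup-norm $e^{-|z|^2/(2t)}\to 0$ while $\|W_z^t\|=1$ --- exactly the mechanism exploited in Lemma~\ref{lem:nobound}). What the argument actually needs is a Berger--Coburn type estimate bounding $\|T_g^t\|$ by the sup-norm of the heat transform of $g$ at a time \emph{strictly smaller} than $t$; with that, $\|T_{f-f_s}^t\|\lesssim\|B_{t/2}f-B_{t/2+s}f\|_\infty\to 0$ because $B_{t/2}f$ is already uniformly continuous. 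This nontrivial estimate (in the spirit of Bauer--Isralowitz, which the paper quotes separately) is the real input here, and the Berezin transform at parameter $t$ cannot replace it.

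The third inclusion is where the plan breaks. You want simultaneously that $A_s=\int h_s(z)\alpha_z(A)\,dz$ be identified with a Toeplitz operator $T_{\sigma_s}^t$ with $\sigma_s\in\operatorname{BUC}$, and that $h_s$ be an approximate identity so $A_s\to A$. These conflict. From $\widetilde{A_s}=h_s*\widetilde A$ and $\widetilde{T_{\sigma_s}^t}=G_t*\sigma_s$, the identification forces $G_t*\sigma_s=h_s*\widetilde A$, i.e.\ $\widehat{\sigma_s}=(\widehat{h_s}/\widehat{G_t})\widehat{\widetilde A}$. Since $\widehat{G_t}$ is a Gaussian, $\widehat{h_s}/\widehat{G_t}$ is a tempered multiplier only if $h_s$ has variance $\ge t$, in which case $h_s$ cannot concentrate at the origin and $A_s\to G_t*A\neq A$ in general. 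So the deconvolution and the approximate-identity requirement cannot both hold. The actual proof in Fulsche (2020) avoids the Berezin transform inversion entirely and proceeds through Werner-style quantum harmonic analysis: one characterizes $\mathcal C_1$ as the norm closure of convolutions $f*S$ of $L^1$-functions with trace-class operators, applies a Wiener--Tauberian argument using that the Fourier--Weyl transform of the vacuum projection $k_0^t\otimes k_0^t$ is nowhere vanishing, and passes to Toeplitz operators via the identity $T_f^t=f*(k_0^t\otimes k_0^t)$. Your Gaussian-smoothing picture is a reasonable first intuition, but the inversion step requires this different machinery, not a direct deconvolution of $\widetilde A$.
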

Here, $\operatorname{BUC}(\mathbb C^n)$ denotes the bounded, uniformly continuous functions on $\mathbb C^n$.

Now, if $\nu$ is a signed Borel measure on $\mathbb C^n$ such that $|\nu|$ is Fock-Carleson, \cite[Theorem 3.7]{Bauer_Isralowitz2012}\footnote{The result of Bauer and Isralowitz also works for complex measures $\nu$ such that $|\nu|$ is Fock-Carleson. Nevertheless, due to the problem we want to discuss, we are only interested in signed measures.} shows that $T_\nu^t$ can be approximated in norm by Toeplitz operators with bounded symbols. We therefore obtain:
\begin{thm}[{\cite[Theorem 3.7]{Bauer_Isralowitz2012}}]
Let $\nu$ be a signed Borel measure on $\mathbb C^n$ such that $|\nu|$ is Fock-Carleson. Then, $T_\nu^t \in \mathcal C_1$.
\end{thm}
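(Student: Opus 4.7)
The plan is to reduce the statement to the two previously cited ingredients: the Bauer--Isralowitz approximation theorem and the identification $\mathcal C_1 = C^\ast(\{T_f^t: f \in L^\infty(\mathbb C^n)\})$. Everything essentially follows by linearity and norm-closedness.

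First I would decompose $\nu = \nu_+ - \nu_-$ via Hahn--Jordan. Since $|\nu| = \nu_+ + \nu_-$ is Fock-Carleson, both $\nu_+$ and $\nu_-$ satisfy the ball condition (ii) of the Isralowitz--Zhu theorem, so each of them is itself a (positive) Fock-Carleson measure. Consequently $T_{\nu_+}^t$ and $T_{\nu_-}^t$ are bounded on $F_t^2$, and $T_\nu^t = T_{\nu_+}^t - T_{\nu_-}^t$.

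Next I would invoke \cite[Theorem 3.7]{Bauer_Isralowitz2012} separately for $\nu_+$ and $\nu_-$ to obtain sequences $(f_k^\pm) \subset L^\infty(\mathbb C^n)$ with $\| T_{\nu_\pm}^t - T_{f_k^\pm}^t\|_{op} \to 0$ as $k \to \infty$. By the preceding theorem, each $T_{f_k^\pm}^t$ lies in $\mathcal C_1$. Since $\mathcal C_1$ is a $C^\ast$-algebra and in particular norm-closed in $\mathcal L(F_t^2)$, the norm limits $T_{\nu_\pm}^t$ belong to $\mathcal C_1$, and hence so does their difference $T_\nu^t$.

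There is no real obstacle here: the non-trivial content—that a Toeplitz operator with Fock-Carleson measure symbol can be approximated in operator norm by Toeplitz operators with bounded symbols—is imported from Bauer--Isralowitz, and the identification of $\mathcal C_1$ with the Toeplitz $C^\ast$-algebra is imported from \cite{Fulsche2020}. The only thing one has to observe is that the hypothesis on $|\nu|$ splits cleanly across the Hahn--Jordan decomposition so that the Bauer--Isralowitz theorem applies to each positive part.
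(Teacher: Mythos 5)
Your proposal is correct and follows essentially the same route as the paper: norm-approximate $T_\nu^t$ by bounded-symbol Toeplitz operators via Bauer--Isralowitz, note these lie in $\mathcal C_1$, and conclude by norm-closedness of the $C^\ast$-algebra $\mathcal C_1$. The only cosmetic difference is that you run the approximation separately on $\nu_\pm$ after a Hahn--Jordan split, whereas the paper (as noted in its footnote) invokes the Bauer--Isralowitz result directly for the signed measure; both variants are valid and rely on exactly the same ingredients.
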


We denote by $\mathcal M = \mathcal M(\operatorname{BUC}(\mathbb C^n))$ the maximal ideal space of the commutative unital $C^\ast$-algebra $\operatorname{BUC}(\mathbb C^n)$, which we understand as a compactification of $\mathbb C^n$. Then, as is described in \cite{Fulsche2020, Fulsche2024, Fulsche_Hagger}, for every $A \in \mathcal C_1$ the map
\begin{align*}
\mathbb C^n \ni z \mapsto \alpha_z(A) \in \mathcal C_1
\end{align*}
extends to a map
\begin{align*}
\mathcal M \ni x \mapsto \alpha_x(A) \in \mathcal C_1
\end{align*}
which is continuous with respect to SOT$^\ast$, i.e.~the map is continuous in strong operator topology for both the operator and the adjoint. The operators $\alpha_x(A)$ for $x \in \mathcal M \setminus \mathbb C^n$ are usually referred to as the \emph{limit operators of $A$}. One then has the following result from \cite{Fulsche_Hagger}, cf.~also \cite{Fulsche2024} which puts it more into the perspective of the algebra $\mathcal C_1$. Here, we denote by $\sigma(B)$ the spectrum of the operator $B$.
\begin{thm}[{\cite[Corollary 29]{Fulsche_Hagger}}]
Let $A \in \mathcal C_1$. Then, the following holds true:
\begin{align*}
\sigma_{ess}(A) = \bigcup_{x \in \mathcal M \setminus \mathbb C^n} \sigma(\alpha_x(A)).
\end{align*}
\end{thm}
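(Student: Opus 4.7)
My plan is to realize the essential spectrum as the spectrum in the quotient $\mathcal C_1/\mathcal K$, and then to exhibit the family of limit-operator maps $(\alpha_x)_{x \in \mathcal M \setminus \mathbb C^n}$ as a faithful family of $*$-homomorphisms on this quotient. As a preliminary step, I would verify that $\mathcal K(F_t^2) \subseteq \mathcal C_1$ and that every $\alpha_x$ with $x \in \mathcal M \setminus \mathbb C^n$ annihilates $\mathcal K$. The first is standard, reducing by density to rank-one operators and invoking the strong continuity of $z \mapsto W_z^t$ already noted in the preceding lemma. For the second, the key observation is that $W_z^t \to 0$ weakly as $|z| \to \infty$ (directly from the explicit form of $k_z^t$ and the reproducing property), from which $\| W_z^t K W_{-z}^t\| \to 0$ for every compact $K$; combined with the SOT$^*$-continuous extension of $\alpha$ to $\mathcal M$, this yields $\alpha_x(K) = 0$ for each $x \in \mathcal M \setminus \mathbb C^n$. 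Consequently $\sigma_{ess}(A) = \sigma_{\mathcal C_1/\mathcal K}(A + \mathcal K)$, and each $\alpha_x$ descends to a $*$-homomorphism $\dot{\alpha}_x$ of $\mathcal C_1/\mathcal K$ into $\mathcal L(F_t^2)$.

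The inclusion $\bigcup_x \sigma(\alpha_x(A)) \subseteq \sigma_{ess}(A)$ is then immediate: if $A-\lambda$ is Fredholm, its class in $\mathcal C_1/\mathcal K$ has an inverse $B + \mathcal K$, and applying $\dot\alpha_x$ produces an inverse of $\alpha_x(A) - \lambda$ on $F_t^2$.

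The reverse inclusion is the substantive half and reduces to the faithfulness statement
\begin{align*}
\| A + \mathcal K\|_{\mathcal C_1/\mathcal K} = \sup_{x \in \mathcal M \setminus \mathbb C^n} \|\alpha_x(A)\|,
\end{align*}
equivalently, that $\bigoplus_{x} \dot\alpha_x$ is an isometric representation of $\mathcal C_1/\mathcal K$. Granting this, assume that $\alpha_x(A) - \lambda$ is invertible on $F_t^2$ for every $x \in \mathcal M \setminus \mathbb C^n$. The SOT$^*$-continuity of $x \mapsto \alpha_x(A)$ on the compact space $\mathcal M \setminus \mathbb C^n$ together with lower semicontinuity of the spectral radius of the inverse yields a uniform bound on $\|(\alpha_x(A) - \lambda)^{-1}\|$; a partition-of-unity / localization argument then patches the local inverses into a quotient inverse of $(A-\lambda) + \mathcal K$, i.e., a Fredholm regularizer of $A - \lambda$.

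The main obstacle is precisely the faithfulness statement: if $\alpha_x(A) = 0$ for every $x \in \mathcal M \setminus \mathbb C^n$, then $A$ is compact. This is the Fock-space analogue of the Lindner--Seidel limit-operator theorem for band-dominated operators on $\ell^p(\mathbb Z^n)$, and its proof in \cite{Fulsche_Hagger} proceeds via a localization scheme adapted to the heat-semigroup / coherent-state structure underlying $\mathcal C_1$, together with the characterization of compactness on $F_t^2$ in terms of the decay of the Berezin symbol outside large balls. Rather than redoing this analysis, I would import this ingredient directly from \cite{Fulsche_Hagger} and package the remaining deduction as outlined above.
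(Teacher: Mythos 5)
The statement you are trying to prove is not proved in this paper at all: it is imported verbatim as \cite[Corollary 29]{Fulsche_Hagger}, and the present paper treats it as a black box. So there is no internal argument of the paper to compare against. Your sketch therefore has to be judged on its own, and since you explicitly defer the core ingredient (``$\alpha_x(A)=0$ for all $x$ at infinity $\Rightarrow$ $A$ compact'') to the same cited reference, your proposal is essentially doing what the paper does, wrapped in more scaffolding.

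That said, the scaffolding you do supply has a genuine gap in the reverse inclusion. Faithfulness of $\bigoplus_x \dot\alpha_x$ gives you $\sigma_{\mathcal C_1/\mathcal K}(A+\mathcal K) = \sigma\bigl(\bigoplus_x \alpha_x(A)\bigr)$ via spectral permanence, but it does \emph{not} give you $\sigma\bigl(\bigoplus_x \alpha_x(A)\bigr) = \bigcup_x \sigma(\alpha_x(A))$. For a direct sum, $\lambda\notin \bigcup_x\sigma(\alpha_x(A))$ yields invertibility of $\bigoplus_x(\alpha_x(A)-\lambda)$ only if $\sup_x\|(\alpha_x(A)-\lambda)^{-1}\|<\infty$, and in addition one must know that $\bigcup_x\sigma(\alpha_x(A))$ is closed (the statement you are proving has no closure on the right-hand side, which is itself a nontrivial assertion). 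Your appeal to ``lower semicontinuity of the spectral radius of the inverse'' does not close this: along the SOT$^*$-continuous family $x\mapsto\alpha_x(A)$ the map $x\mapsto\|(\alpha_x(A)-\lambda)^{-1}\|$ is \emph{lower} semicontinuous, which on a compact set gives no upper bound at all --- it is exactly the wrong direction. The missing mechanism in the limit-operator literature is the self-similarity of the family of limit operators (limit operators of limit operators are again limit operators of $A$), which is what yields both the uniform bound and the closedness, on top of the faithfulness statement. If you want a self-contained argument rather than a citation, these two points need to be addressed explicitly.
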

We now return to the problem of essential positivity on the Fock space. Let $A \in \mathcal C_1$ be self-adjoint. Then, $\alpha_z(A) = W_zAW_{-z}$ is self-adjoint for any $z \in \mathbb C^n$. Since for each $f \in F_t^2$ the map $z \mapsto \langle \alpha_z(A) f, f\rangle$ is real valued and extends continuously to
\begin{align*}
\mathcal M \ni x \mapsto \langle \alpha_x(A) f, f\rangle,
\end{align*}
we obtain that $\alpha_x(A)$ is self-adjoint for every $x \in \mathcal M$. Now, by the above theorem, we obtain:
\begin{cor}
Let $A \in \mathcal C_1$ be self-adjoint. Then, $A$ is essentially positive if and only if $\alpha_x(A) \geq 0$ for every $x \in \mathcal M \setminus \mathbb C^n$.
\end{cor}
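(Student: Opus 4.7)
The plan is to deduce the corollary almost immediately from the preceding spectral identity
$\sigma_{ess}(A) = \bigcup_{x \in \mathcal M \setminus \mathbb C^n} \sigma(\alpha_x(A))$, combined with the fact already established in the paragraph above: each limit operator $\alpha_x(A)$ is self-adjoint. The guiding principle is that for a self-adjoint operator, positivity is equivalent to the spectrum lying in $[0, \infty)$, so the problem reduces to a spectral comparison.

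First I would spell out the forward direction. Assume $A$ is essentially positive, i.e.\ $\sigma_{ess}(A) \subseteq [0, \infty)$. Fix $x \in \mathcal M \setminus \mathbb C^n$. By the spectral identity, $\sigma(\alpha_x(A)) \subseteq \sigma_{ess}(A) \subseteq [0, \infty)$. Since $\alpha_x(A)$ is self-adjoint (by the argument given just before the corollary, using that $z \mapsto \langle \alpha_z(A)f, f\rangle$ is real-valued and extends continuously to $\mathcal M$), having its spectrum contained in $[0, \infty)$ is equivalent to $\alpha_x(A) \geq 0$.

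Next I would do the converse. Assume $\alpha_x(A) \geq 0$ for every $x \in \mathcal M \setminus \mathbb C^n$. Each $\alpha_x(A)$ is self-adjoint and positive, so $\sigma(\alpha_x(A)) \subseteq [0, \infty)$. Taking the union over $x \in \mathcal M \setminus \mathbb C^n$ and applying the spectral identity yields $\sigma_{ess}(A) \subseteq [0, \infty)$, which is exactly essential positivity of $A$.

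There is no real obstacle here; the proof is a one-line invocation of the previously cited $\sigma_{ess}$-formula from \cite{Fulsche_Hagger} together with the characterization of positivity of a self-adjoint operator via its spectrum. The only conceptual point worth flagging is making sure the reader sees why self-adjointness of all the $\alpha_x(A)$ is needed: without it, non-negative spectrum does not imply positivity, so that preliminary observation is what allows the spectral containment to be upgraded to operator positivity.
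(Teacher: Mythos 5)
Your proof is correct and follows exactly the paper's intended argument: the corollary is stated as an immediate consequence of the essential-spectrum formula from \cite{Fulsche_Hagger} together with the self-adjointness of each limit operator $\alpha_x(A)$ established in the preceding paragraph. You have simply spelled out both implications explicitly, which the paper leaves to the reader.
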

This characterizes essential positivity in largest generality for elements from the Toeplitz algebra. Let us reformulate the result in one particular case, which involves the notion of \emph{vanishing oscillation}. We recall that a continuous bounded function $f: \mathbb C^n \to \mathbb C$ is said to be of vanishing oscillation (at infinity), abbreviated as $f \in \operatorname{VO}_\partial(\mathbb C^n)$, if:
\begin{align*}
\sup_{|w| \leq 1} |f(z) - f(z-w)| \to 0, \quad |z| \to \infty.
\end{align*}
The key fact we will use is the following:
\begin{lem}
Let $A \in \mathcal C_1$. Then, $\widetilde{A}$ is of vanishing oscillation if and only if for every $x \in \mathcal M \setminus \mathbb C^n$ there exists some $c_x \in \mathbb C$ such that $\alpha_x(A) = c_x I$.
\end{lem}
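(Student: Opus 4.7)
The approach rests on three basic ingredients. First, a direct computation with the Weyl operators gives the shift-equivariance of the Berezin transform: since $W_{-z}k_w^t$ equals $k_{w-z}^t$ up to a unimodular factor, one obtains $\widetilde{\alpha_z(A)}(w) = \widetilde{A}(w - z)$ for all $z, w \in \mathbb C^n$. Second, since $A \in \mathcal C_1$, the assignment $z \mapsto \alpha_z(A)$ extends SOT$^\ast$-continuously to all of $\mathcal M$, so that $\widetilde{\alpha_{z_\lambda}(A)}(w) \to \widetilde{\alpha_x(A)}(w)$ pointwise in $w$ whenever $z_\lambda \to x$, together with the uniform bound $\|\alpha_z(A)\| \leq \|A\|$. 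Third, the Berezin transform is injective on $\mathcal L(F_t^2)$ (the sesquianalytic extension $(z,w) \mapsto \langle B K_z^t, K_w^t\rangle$ is holomorphic in $w$ and antiholomorphic in $z$, so it is determined by its values on the totally real diagonal). Consequently, $\alpha_x(A) = c_x I$ if and only if the function $w \mapsto \widetilde{\alpha_x(A)}(w)$ is constant on $\mathbb C^n$.

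For the forward direction, fix $x \in \mathcal M \setminus \mathbb C^n$ and any net $z_\lambda \to x$. Because $\mathbb C^n$ is open in $\mathcal M$, we must have $|z_\lambda| \to \infty$. The hypothesis $\widetilde{A} \in \operatorname{VO}_\partial$ can be upgraded from unit shifts to an arbitrary fixed shift $w$ by chaining $\lceil |w| \rceil$ triangle inequalities, giving $|\widetilde{A}(w - z_\lambda) - \widetilde{A}(-z_\lambda)| \to 0$ for every fixed $w$. Passing to the limit via the equivariance identity yields $\widetilde{\alpha_x(A)}(w) = \widetilde{\alpha_x(A)}(0)$, so $\widetilde{\alpha_x(A)}$ is constant, and injectivity of the Berezin transform forces $\alpha_x(A) = c_x I$.

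For the converse, suppose every limit operator is a scalar multiple of the identity and, for contradiction, that $\widetilde{A} \notin \operatorname{VO}_\partial$. Then there exist $\epsilon > 0$, $(z_n) \subseteq \mathbb C^n$ with $|z_n| \to \infty$, and $(w_n) \subseteq \overline{B}(0,1)$ such that $|\widetilde{A}(z_n) - \widetilde{A}(z_n - w_n)| \geq \epsilon$. By compactness of $\mathcal M$ and of $\overline{B}(0,1)$, pass to a subnet with $-z_n \to y \in \mathcal M \setminus \mathbb C^n$ and $w_n \to w^\ast$. Express both terms through the \emph{same} limit operator using the equivariance identity: $\widetilde{A}(z_n) = \widetilde{\alpha_{-z_n}(A)}(0)$ and $\widetilde{A}(z_n - w_n) = \widetilde{\alpha_{-z_n}(A)}(-w_n)$. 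The uniform bound on $\|\alpha_{-z_n}(A)\|$ combined with norm-continuity of $w \mapsto k_w^t$ absorbs the variation $w_n \to w^\ast$, and SOT$^\ast$-convergence $\alpha_{-z_n}(A) \to \alpha_y(A)$ then yields $\widetilde{A}(z_n) \to \widetilde{\alpha_y(A)}(0)$ and $\widetilde{A}(z_n - w_n) \to \widetilde{\alpha_y(A)}(-w^\ast)$. Since $\alpha_y(A) = c_y I$, both limits equal $c_y$, contradicting $\geq \epsilon$.

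The only real bookkeeping issue I expect is to route both $\widetilde{A}(z_n)$ and $\widetilde{A}(z_n - w_n)$ through the \emph{same} limit operator $\alpha_{-z_n}(A)$ — obtained by using the equivariance formula with a common base point and two different values of the argument $w$, rather than with two different base points. Without this choice the limits would lie on two distinct orbits in $\mathcal M \setminus \mathbb C^n$, with no reason for the associated scalars $c_\cdot$ to agree; once the correct routing is in place, the remainder is a routine compactness and continuity argument.
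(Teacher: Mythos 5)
Your proof is correct. The paper's own proof is essentially a citation: it invokes the correspondence theorem from \cite{Fulsche2020, Fulsche2024} and refers to an adaptation of \cite[Theorem 36]{Hagger2019}, without reproducing the argument. You instead give a self-contained, elementary proof built from three ingredients: the shift-equivariance identity $\widetilde{\alpha_z(A)}(w) = \widetilde{A}(w-z)$ (a direct Weyl-operator calculation), the SOT$^\ast$-continuous extension $x \mapsto \alpha_x(A)$ over the compact space $\mathcal M$ together with the uniform norm bound $\|\alpha_z(A)\| \le \|A\|$, and injectivity of the Berezin transform. All of the individual steps check out: the chaining of the VO modulus to upgrade unit shifts to bounded shifts in the forward direction is sound; the compactness/subnet argument in the converse is sound; and, as you rightly flag, routing both $\widetilde{A}(z_n)$ and $\widetilde{A}(z_n-w_n)$ through the \emph{same} limit operator $\alpha_{-z_n}(A)$ is the essential bookkeeping step, since otherwise the two limits land on a priori unrelated fibers. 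Your argument is presumably close in spirit to what lies inside \cite[Theorem 36]{Hagger2019}, so the mathematical content is not a genuinely new route; what your version buys is that the reader does not need to unwind the correspondence-theorem machinery or port a Bergman-space result to the Fock-space setting — the equivariance identity plus Berezin injectivity do all the work directly.
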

\begin{proof}
Using the correspondence theorem, \cite[Theorem 2.21]{Fulsche2020} or \cite[Theorem 3.1]{Fulsche2024}, together with an adaptation of \cite[Theorem 36]{Hagger2019}, shows that for $A \in \mathcal C_1$, $\widetilde{A} \in \operatorname{VO}_\partial (\mathbb C^n)$ is equivalent to $\alpha_x(A)$ being a constant multiple of the identity for every $x \in \mathcal M \setminus \mathbb C^n$. 
\end{proof}
\begin{cor}\label{cor:vo}
Let $A \in \mathcal C_1$ be self-adjoint such that $\widetilde{A}$ is of vanishing oscillation. Then, $A$ is essentially positive if and only if $\liminf_{|z|\to \infty} \widetilde{A}(z) \geq 0$.
\end{cor}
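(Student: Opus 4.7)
The plan is to reduce the statement to the two results immediately preceding it, and then translate the positivity of the limit operators into a statement about $\widetilde{A}$ near infinity. By the previous corollary, $A$ is essentially positive if and only if $\alpha_x(A) \geq 0$ for every $x \in \mathcal{M}\setminus\mathbb{C}^n$. The hypothesis $\widetilde{A} \in \operatorname{VO}_\partial(\mathbb{C}^n)$ combined with the preceding lemma guarantees that each such $\alpha_x(A)$ is a scalar multiple $c_x I$, and self-adjointness of $A$ (hence of $\alpha_x(A)$) forces $c_x \in \mathbb{R}$. Thus the corollary will follow once I show that $c_x \geq 0$ for every boundary $x$ is equivalent to $\liminf_{|z|\to\infty}\widetilde{A}(z) \geq 0$.

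The key identity is $c_x = \lim_{z\to x}\widetilde{A}(-z)$. To obtain it, first verify by a direct Weyl-operator computation that $\widetilde{\alpha_z(A)}(0) = \widetilde{A}(-z)$ for every $z \in \mathbb{C}^n$. Since $z \mapsto \alpha_z(A)$ extends SOT$^\ast$-continuously to $\mathcal{M}$, the bounded function $z \mapsto \widetilde{A}(-z)$ extends continuously to $\mathcal{M}$ with boundary value $\widetilde{\alpha_x(A)}(0) = c_x$ at $x \in \mathcal{M}\setminus\mathbb{C}^n$.

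From here, two easy implications will finish the proof. If $\liminf_{|z|\to\infty}\widetilde{A}(z) \geq 0$, then for any boundary point $x$ every net $z_\lambda \to x$ in $\mathbb{C}^n$ must satisfy $|z_\lambda|\to\infty$ (otherwise a subnet would converge in $\mathbb{C}^n$, contradicting $x \notin \mathbb{C}^n$), so $c_x = \lim\widetilde{A}(-z_\lambda) \geq \liminf_{|w|\to\infty}\widetilde{A}(w)\geq 0$. Conversely, choose a sequence $w_n \in \mathbb{C}^n$ with $|w_n|\to\infty$ and $\widetilde{A}(w_n) \to \liminf_{|z|\to\infty}\widetilde{A}(z)$; by compactness of $\mathcal{M}$, the net $(-w_n)$ has a subnet converging to some boundary point $x$, and for this $x$ the value $c_x$ equals the liminf, which is then $\geq 0$ by assumption.

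The only mild obstacle worth flagging is the sign in $\widetilde{\alpha_z(A)}(0) = \widetilde{A}(-z)$, but this bookkeeping is harmless: the homeomorphism $w\mapsto -w$ of $\mathbb{C}^n$ extends to a self-bijection of $\mathcal{M}\setminus\mathbb{C}^n$, so the set $\{c_x : x \in \mathcal{M}\setminus\mathbb{C}^n\}$ coincides with the set of cluster values of $\widetilde{A}$ at infinity, whose infimum is exactly $\liminf_{|z|\to\infty}\widetilde{A}(z)$.
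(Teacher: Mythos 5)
Your proof is correct and follows essentially the same route as the paper: reduce via the preceding corollary and lemma to the scalars $c_x$, identify $c_x = \langle \alpha_x(A)1,1\rangle = \lim_{z\to x}\widetilde{A}(-z)$ (noting $k_0 = 1$ and $W_{-z}1 = k_{-z}^t$), and match the infimum of the $c_x$ with $\liminf_{|z|\to\infty}\widetilde{A}(z)$. The paper compresses the net arguments into ``it is not hard to see''; you have simply spelled them out, which is fine.
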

\begin{proof}
By the assumption, $\alpha_x(A) = c_x I$ for every $x \in \mathcal M \setminus \mathbb C^n$. Since $c_x = \langle \alpha_x(A) 1, 1\rangle$ continuously depends on $x \in \mathcal M$, it is not hard to see that
\begin{align*}
\inf_{x \in \mathcal M \setminus \mathbb C^n} c_x &= \inf_{x \in \mathcal M\setminus \mathbb C^n} \langle \alpha_x(A) 1, 1\rangle \\
&= \liminf_{|z| \to \infty} \langle \alpha_z(A) 1, 1\rangle\\
&= \liminf_{|z|\to\infty} \widetilde{A}(-z).
\end{align*}
In this case, we have
\begin{align*}
\sigma_{ess}(A) = \bigcup_{x \in \mathcal M \setminus \mathbb C^n} \sigma(\alpha_x(A)) = \{ c_x: ~x \in \mathcal M \setminus \mathbb C^n\}.
\end{align*}
These facts show the result.
\end{proof}
We want to emphasize that any Toeplitz operator $T_f^t$ the symbol of which is of vanishing mean oscillation satisfies the previous  result. This class also contains discontinuous symbols.

Specialising the previous result to $A = T_\nu^t$ with $|\nu|$ Fock-Carleson yields a result which is in analogy to \cite[Theorem 9]{Perala_Virtanen2023} without the assumption of radiality. Indeed, if one assumes that $A \in \mathcal C_1$ such that $\lim_{|z|\to \infty} \widetilde{A}(z)$ exists, then this has the rather strong implication that $A \in \mathcal K(\mathcal H) + \mathbb C I$. In particular, for $A$ self-adjoint with $\lim_{|z| \to \infty} \widetilde{A}(z) \geq 0$, this clearly shows essential positivity of $A$. 

It is well-known that there are certain implications between properties of the symbol, essential positivity of the operator and properties of the Berezin transform. We shortly summarize them for completeness. These facts are certainly well-known and can be proven by different means. Just for the fun of it, we sketch a  proof which goes by considerations of limit operators and limit functions.
\begin{prop}
Let $f \in \operatorname{BUC}(\mathbb C^n)$ be real-valued. If $\liminf_{|z| \to \infty} f(z) \geq 0$, then $T_f^t$ is essentially positive.  
\end{prop}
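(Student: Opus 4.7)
The plan is to invoke the earlier corollary characterizing essential positivity of elements of $\mathcal C_1$ through positivity of all their limit operators. Since $f \in \operatorname{BUC}(\mathbb C^n)$, the identification $\mathcal C_1 = \overline{\{T_g^t : g \in \operatorname{BUC}(\mathbb C^n)\}}$ puts $T_f^t$ inside $\mathcal C_1$, so it suffices to verify $\alpha_x(T_f^t) \geq 0$ for every $x \in \mathcal M \setminus \mathbb C^n$. My strategy is to recognize each such limit operator as a Toeplitz operator with a suitable \emph{limit symbol} $f_x$ and then read off nonnegativity of $f_x$ from the $\liminf$ hypothesis.

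First I would carry out a direct Weyl-operator computation verifying $\alpha_z(T_f^t) = T_{f(\cdot - z)}^t$ for every $z \in \mathbb C^n$ (the sign depending on the convention fixed in the excerpt). Next, for each fixed $w \in \mathbb C^n$, the function $z \mapsto f(w - z)$ lies in $\operatorname{BUC}(\mathbb C^n)$, hence its Gelfand transform extends continuously to the compactification $\mathcal M$; evaluating this extension at a boundary point $x$ defines the value $f_x(w)$. A short check shows that $f_x$ inherits the modulus of continuity of $f$. By the correspondence theorem between $\mathcal C_1$ and its symbol / Berezin-transform picture cited in the excerpt, this limit function is precisely the symbol of the limit operator: $\alpha_x(T_f^t) = T_{f_x}^t$.

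Now fix $x \in \mathcal M \setminus \mathbb C^n$ and a net $(z_\lambda) \subseteq \mathbb C^n$ with $z_\lambda \to x$ in $\mathcal M$. Because $x \notin \mathbb C^n$, no subnet of $(z_\lambda)$ can stay bounded in $\mathbb C^n$ (any bounded subnet would cluster at a point of $\mathbb C^n$), so $|z_\lambda| \to \infty$; for each fixed $w$ the same then holds for $w - z_\lambda$. The hypothesis $\liminf_{|z|\to\infty} f(z) \geq 0$ then yields
\begin{align*}
f_x(w) \;=\; \lim_\lambda f(w - z_\lambda) \;\geq\; \liminf_{|u|\to \infty} f(u) \;\geq\; 0,
\end{align*}
so $f_x \geq 0$ pointwise on $\mathbb C^n$. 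Since
\begin{align*}
\langle T_{f_x}^t g, g\rangle = \int_{\mathbb C^n} f_x(w)\, |g(w)|^2\, d\mu_t(w) \;\geq\; 0
\end{align*}
for every $g \in F_t^2$, we conclude $\alpha_x(T_f^t) = T_{f_x}^t \geq 0$, and the earlier corollary then delivers essential positivity of $T_f^t$.

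The only genuinely nontrivial input is the identification $\alpha_x(T_f^t) = T_{f_x}^t$; everything else is either a routine Weyl-operator computation or an elementary net-convergence argument. This identification is exactly what the limit-operator formalism referenced in the excerpt is designed to supply, so I would invoke it rather than reprove it here.
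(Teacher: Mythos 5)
Your proof is correct and follows essentially the same route as the paper's: identify each limit operator $\alpha_x(T_f^t)$ as a Toeplitz operator $T_{f_x}^t$ with limit symbol $f_x$, show $f_x \geq 0$ from the $\liminf$ hypothesis because nets converging to boundary points of $\mathcal M$ must escape to infinity, and invoke the limit-operator criterion for essential positivity. The paper phrases the passage to the limit symbol via local uniform convergence of $\alpha_{z_\gamma}(f)$ and SOT$^\ast$ convergence of the corresponding Toeplitz operators, while you phrase it via the Gelfand extension and the correspondence theorem, but these are the same mechanism.
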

\begin{proof}
Let us define $\alpha_z(f)(w) = f(w-z)$ for $w, z \in \mathbb C^n$. Let $x \in \mathcal M \setminus \mathbb C^n$ and $(z_\gamma) \subset \mathbb C^n$ a net converging in $\mathcal M$ to $x$. Then, it is not hard to see that $\alpha_{z_\gamma}(f)$ converges to some function $\alpha_{x}(f) \in \operatorname{BUC}(\mathbb C^n)$, and the convergence is uniformly on compact subsets (cf.~\cite{Fulsche2020, Fulsche2024} for details). Further, $\alpha_{z_\gamma}(T_f^t) = T_{\alpha_{z_\gamma}(f)}^t \to T_{\alpha_{x}(f)}^t = \alpha_x(T_f^t)$, with convergence in SOT$^\ast$. When $\liminf_{|z|\to \infty} f(z) \geq 0$, then this implies $\alpha_x(f) \geq 0$, hence $\alpha_x(T_f^t) = T_{\alpha_x(f)}^t \geq 0$. Since $x \in \mathcal M \setminus \mathbb C^n$ was arbitrary, this shows that all limit operators are positive.
\end{proof}
Similarly, one proves the following:
\begin{prop}
Let $A \in \mathcal C_1$ be self-adjoint. If $A$ is essentially positive, then $\limsup_{|z| \to \infty} \widetilde{A}(z) \geq 0$.
\end{prop}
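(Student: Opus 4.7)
The plan is to combine the limit-operator description of $\sigma_{ess}$ from the Fulsche-Hagger theorem recalled above with the identity $\widetilde A(-z) = \langle \alpha_z(A) 1, 1\rangle$ already exploited in the proof of the preceding corollary. First, I would observe that essential positivity forces every limit operator to be positive: since $A$ is self-adjoint with $\sigma_{ess}(A) \subseteq [0,\infty)$, the Fulsche-Hagger theorem yields $\sigma(\alpha_x(A)) \subseteq [0,\infty)$ for each $x \in \mathcal M \setminus \mathbb C^n$, and combined with the self-adjointness of $\alpha_x(A)$ (noted in the paragraph preceding the first corollary) this spectral inclusion upgrades to the operator inequality $\alpha_x(A) \geq 0$.

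Second, I would verify the Berezin-transform identity as a short computation. Observing that $W_z^t 1 = k_z^t$ directly from the definition of $W_z^t$, and that $(W_z^t)^\ast = W_{-z}^t$ (which follows from parts (1)-(2) of the Weyl-operator lemma, noting $\operatorname{Im}(z \cdot \overline{-z}) = 0$), one finds
\[
\widetilde A(z) = \langle A k_z^t, k_z^t\rangle = \langle W_{-z}^t A W_z^t 1, 1\rangle = \langle \alpha_{-z}(A) 1, 1\rangle,
\]
which gives the desired identity after substituting $z \mapsto -z$.

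Third, I would close by compactness of $\mathcal M$. For any sequence $(z_k) \subset \mathbb C^n$ with $|z_k| \to \infty$, one can pass to a subnet with $z_{k_\gamma} \to y$ in $\mathcal M$ for some $y \in \mathcal M \setminus \mathbb C^n$ (such a limit point exists in $\mathcal M \setminus \mathbb C^n$ because $\mathcal M$ is compact and $|z_{k_\gamma}| \to \infty$ rules out limit points in $\mathbb C^n$). The SOT$^\ast$-continuity of $x \mapsto \alpha_x(A)$ then forces
\[
\widetilde A(-z_{k_\gamma}) = \langle \alpha_{z_{k_\gamma}}(A) 1, 1\rangle \to \langle \alpha_y(A) 1, 1\rangle \geq 0.
\]
Since every subnet of $(\widetilde A(-z_k))$ admits a further subnet with a non-negative limit, we conclude $\liminf_{|z|\to\infty} \widetilde A(z) \geq 0$, which is strictly stronger than the asserted $\limsup \geq 0$. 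I expect no real obstacle: the argument is essentially bookkeeping on top of machinery already developed, and the only routine checks are the relation $(W_z^t)^\ast = W_{-z}^t$ and the fact that weak limits suffice to pass through the bilinear pairing against the fixed vector $1 = k_0^t$.
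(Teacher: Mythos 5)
Your argument is correct and follows the strategy the paper intends (the paper leaves this proof as ``similar'' to the preceding proposition, which is the same limit-operator reasoning). You correctly reduce essential positivity to positivity of each limit operator $\alpha_x(A)$, $x \in \mathcal M \setminus \mathbb C^n$, via the Fulsche--Hagger corollary together with the observation, already made in the paper, that these limit operators are self-adjoint. The computation $(W_z^t)^\ast = W_{-z}^t$ and the identity $\widetilde A(-z) = \langle \alpha_z(A)1,1\rangle$ are both right, and the latter is exactly what the paper uses in the proof of the vanishing-oscillation corollary. The compactness step is also sound: since $\mathcal M$ is compact and $\mathbb C^n$ is open in $\mathcal M$, every net with $|z_\gamma|\to\infty$ has all its cluster points in the corona.

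Worth flagging: you have in fact proved the strictly stronger conclusion $\liminf_{|z|\to\infty}\widetilde A(z) \geq 0$, whereas the paper only asserts $\limsup \geq 0$. The stronger inequality is genuine: your compactness argument identifies $\liminf_{|z|\to\infty}\widetilde A(z)$ with $\inf_{x\in\mathcal M\setminus\mathbb C^n}\langle\alpha_x(A)1,1\rangle$ (the same identity the paper records in the proof of Corollary~\ref{cor:vo}), and this infimum is $\geq 0$ once every $\alpha_x(A)$ is positive. One can also see this more directly: essential positivity of a self-adjoint operator forces its negative spectral part to be compact, and the Berezin transform of a compact operator vanishes at infinity. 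Either way, the $\limsup$ in the paper's statement can be replaced by $\liminf$, and your proof establishes this sharper version.
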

We will now prove that the conjecture of Per\"{a}l\"{a} and Virtanen is in general false when the assumption of radiality is dropped. We need the following preparatory lemma.
\begin{lem}\label{lem:nobound}
There exists no constant $M > 0$ such that 
\begin{align*}
\| T_f\|_{ess} \leq M \limsup_{|w| \to \infty} |\widetilde{f}(w)|
\end{align*}
holds true for all $f \in \operatorname{BUC}(\mathbb C^n)$.
\end{lem}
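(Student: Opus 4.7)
The plan is to use the Weyl operators to construct a one-parameter family of counterexamples. Since $W_z^t = T_{g_z^t}^t$ with $g_z^t \in \operatorname{BUC}(\mathbb C^n)$ as recorded just before the lemma, it suffices to prove that
\begin{align*}
\|T_{g_z^t}^t\|_{ess} = 1 \quad \text{while} \quad |\widetilde{g_z^t}(w)| \equiv e^{-|z|^2/(2t)}.
\end{align*}
Letting $|z| \to \infty$ then drives $\limsup_{|w|\to\infty}|\widetilde{g_z^t}(w)|$ to $0$ while the essential norm stays equal to $1$, so no finite constant $M$ can control the entire family at once.

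For the essential norm, parts (1) and (2) of the Weyl lemma give that $W_z^t$ is unitary with $W_z^t W_{-z}^t = I$, so its image $\pi(W_z^t)$ in the Calkin algebra $\mathcal L(F_t^2)/\mathcal K(F_t^2)$ is a nonzero unitary and hence has norm exactly $1$. For the Berezin transform, I would exploit the coherent-state identity $k_w^t = W_w^t 1$ to rewrite
\begin{align*}
\widetilde{W_z^t}(w) = \langle W_z^t W_w^t 1, W_w^t 1\rangle = \langle W_{-w}^t W_z^t W_w^t \, 1, 1\rangle,
\end{align*}
and then apply part (2) twice to conclude that $W_{-w}^t W_z^t W_w^t$ coincides with $W_z^t$ up to a unimodular phase. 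Thus $|\widetilde{W_z^t}(w)| = |\langle W_z^t 1, 1\rangle|$ is independent of $w$, and since $W_z^t 1 = k_z^t$ the reproducing property applied at the origin yields $\langle k_z^t, 1\rangle = k_z^t(0) = e^{-|z|^2/(2t)}$.

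There is no genuine obstacle here; the underlying phenomenon is the standard mismatch between the unit norm of a coherent-state shift and the Gaussian decay of its vacuum expectation, already visible in the normalization constant of $k_z^t$. The only point that needs care is verifying that the two applications of the commutation relation (2) really produce a modulus-one scalar rather than contributing nontrivially to $|\widetilde{W_z^t}(w)|$. The content of the lemma is just to record that this gap already occurs within $\operatorname{BUC}$ symbols, so that the essential norm of $T_f^t$ cannot be bounded in terms of the decay of $\widetilde f$ at infinity alone.
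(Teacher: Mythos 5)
Your proposal is correct and uses essentially the same idea as the paper: testing the inequality against the Weyl operators $W_z^t$. The only difference is a harmless normalization — the paper works with the unimodular symbol $h_z^t$, so that $T_{h_z^t}^t = e^{-|z|^2/(2t)}W_z^t$ has essential norm $e^{-|z|^2/(2t)}$ and Berezin transform of modulus $e^{-|z|^2/t}$, whereas you work with $g_z^t = e^{|z|^2/(2t)}h_z^t$ so that $T_{g_z^t}^t = W_z^t$ has essential norm $1$ and Berezin transform of modulus $e^{-|z|^2/(2t)}$; in both cases the ratio $\|T^t\|_{ess}/\limsup_{|w|\to\infty}|\widetilde{\,\cdot\,}(w)|$ is $e^{|z|^2/(2t)} \to \infty$, and your calculation of $|\widetilde{W_z^t}|$ is in fact the correct constant.
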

\begin{proof} We first want to emphasize that the proof is a straightforward adaptation of \cite[Corollary 1]{Coburn2012}.
For $z \in \mathbb C^n$ set $h_z^t(w) = e^{2i\im(w \cdot \overline{z})/t}$. Then, $h_z^t \in \operatorname{BUC}(\mathbb C^n)$. As is well-known,
\begin{align*}
T_{h_z^t}^t = e^{-\frac{|z|^2}{2t}}W_z^t,
\end{align*}
therefore
\begin{align*}
\widetilde{T_{h_z}^t}(w) = e^{-\frac{|z|^2}{2t}} \widetilde{W_z^t}(w) = e^{-\frac{3}{2t}|z|^2 + 2i\frac{\im(w \cdot \overline{z})}{t}}
\end{align*}
such that
\begin{align*}
\limsup_{|w| \to \infty} |\widetilde{h_z^t}(w)| = e^{-\frac{3}{2t}|z|^2},
\end{align*}
but, since $e^{\frac{|z|^2}{2t}} T_{h_z^t}^t = W_z^t$ is unitary, we have
\begin{align*}
\| T_{h_z^t}^t\|_{ess} = e^{-\frac{|z|^2}{2t}} \| W_z^t\|_{ess} = e^{-\frac{|z|^2}{2t}}.
\end{align*}
Therefore, any anticipated estimate $\| T_f^t \|_{ess} \leq M \limsup_{|w| \to \infty} |\widetilde{f}(w)|$ is violated by
\begin{align*}
\| T_{h_z^t}\|_{ess} = e^{-\frac{|z|^2}{2t}} \leq M e^{-\frac{3}{2t}|z|^2},
\end{align*}
since the right-hand side goes faster to $0$ than the left-hand side as $|z| \to \infty$.
\end{proof}
\begin{prop}\label{prop:nopos}
Let $f \in \operatorname{BUC}(\mathbb C^n)$ be real-valued. Then, for $T_f^t$ being essentially positive, it is not sufficient that $\liminf_{|w|\to \infty} \widetilde{f}(w) \geq 0$.
\end{prop}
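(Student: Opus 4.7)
The plan is to exhibit explicitly, for each sufficiently large $z \in \mathbb C^n$, a real-valued $f_z \in \operatorname{BUC}(\mathbb C^n)$ with $\widetilde{f_z} \geq 0$ everywhere (so that $\liminf_{|w|\to\infty}\widetilde{f_z}(w) \geq 0$ holds trivially), but such that $T_{f_z}^t$ possesses a limit operator whose spectrum dips strictly below $0$. By the corollary after Theorem~2.6 of the excerpt (essential positivity $\iff$ every $\alpha_x(A) \geq 0$), this rules out essential positivity of $T_{f_z}^t$.

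The symbol is a real shift of the one driving Lemma~\ref{lem:nobound}. Set $\phi_z(w) := \operatorname{Re}(h_z^t(w)) = \cos(2\operatorname{Im}(w\cdot\overline z)/t)$, which is real-valued and in $\operatorname{BUC}(\mathbb C^n)$. Then
\begin{align*}
T_{\phi_z}^t = \tfrac12\bigl(T_{h_z^t}^t + T_{h_{-z}^t}^t\bigr) = e^{-|z|^2/(2t)}\operatorname{Re}(W_z^t),
\end{align*}
and the computation in the proof of Lemma~\ref{lem:nobound} gives $\widetilde{\phi_z}(w) = e^{-3|z|^2/(2t)}\cos(2\operatorname{Im}(w\cdot\overline z)/t)$, of sup norm $e^{-3|z|^2/(2t)}$. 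Setting $f_z := \phi_z + e^{-3|z|^2/(2t)}$ then makes $\widetilde{f_z} \geq 0$ on all of $\mathbb C^n$, while
\begin{align*}
T_{f_z}^t = e^{-|z|^2/(2t)}\operatorname{Re}(W_z^t) + e^{-3|z|^2/(2t)} I.
\end{align*}

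To produce a limit operator of $T_{f_z}^t$ with negative spectrum I exploit the CCR $W_u^t W_z^t (W_u^t)^\ast = e^{-2i\operatorname{Im}(u\cdot\overline z)/t} W_z^t$: I pick a net $(u_\gamma) \subset \mathbb C^n$ with $|u_\gamma| \to \infty$ along which the phase $e^{-2i\operatorname{Im}(u_\gamma \cdot \overline z)/t}$ tends to $-1$, pass to a subnet converging in $\mathcal M$ to some $x \in \mathcal M \setminus \mathbb C^n$, and invoke SOT$^\ast$-continuity of the shift action on $\mathcal C_1$ to obtain
\begin{align*}
\alpha_x(T_{f_z}^t) = -e^{-|z|^2/(2t)} \operatorname{Re}(W_z^t) + e^{-3|z|^2/(2t)} I.
\end{align*}
Since $W_z^t$ is a nontrivial Weyl unitary, $\sigma(W_z^t) = \mathbb T$, hence $\operatorname{Re}(W_z^t)$ has spectrum $[-1,1]$ and the displayed operator has $-e^{-|z|^2/(2t)} + e^{-3|z|^2/(2t)}$ in its spectrum, strictly negative once $|z|$ is large. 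The one delicate step is the identity $\sigma(W_z^t) = \mathbb T$ for $z \neq 0$; the cleanest argument is that the CCR shows $W_z^t$ is unitarily equivalent to $e^{-2i\operatorname{Im}(u\cdot\overline z)/t} W_z^t$ for every $u \in \mathbb C^n$, so $\sigma(W_z^t)$ is invariant under every rotation of $\mathbb T$ and, being a nonempty closed subset of $\mathbb T$, must equal $\mathbb T$.
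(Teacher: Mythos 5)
Your proof is correct and takes a genuinely different route from the paper. The paper argues by contradiction: it assumes the implication holds for all real-valued $f \in \operatorname{BUC}(\mathbb C^n)$, deduces the bound $\|T_f^t\|_{ess} \leq \limsup_{|w|\to\infty}|\widetilde{f}(w)|$ for real-valued $f$, extends it (with a factor $2$) to complex-valued symbols by splitting into real and imaginary parts, and then invokes Lemma~\ref{lem:nobound} to reach a contradiction. Your argument is direct: you exhibit, for each $z \neq 0$, an explicit real-valued $f_z = \cos(2\operatorname{Im}(\cdot\cdot\overline z)/t) + e^{-3|z|^2/(2t)}$ with $\widetilde{f_z} \geq 0$ everywhere, and then show that $T_{f_z}^t$ has a limit operator $-e^{-|z|^2/(2t)}\operatorname{Re}(W_z^t) + e^{-3|z|^2/(2t)}I$ whose spectrum reaches down to $e^{-|z|^2/(2t)}(e^{-|z|^2/t}-1) < 0$; by the limit-operator characterization of essential positivity, this rules out essential positivity. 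The two proofs share the crucial ingredient (the oscillatory symbol $h_z^t$ and its relation to the Weyl operators), but yours has the merit of being fully constructive: it produces concrete counterexamples rather than only establishing nonexistence of a uniform bound, and it makes explicit the mechanism (a limit operator with negative spectrum) by which essential positivity fails. A small improvement: your phrase \emph{once $|z|$ is large} is unnecessary, since $e^{-|z|^2/(2t)}(e^{-|z|^2/t}-1) < 0$ already for every $z \neq 0$. Also, note that your identification $\alpha_u(W_z^t) = e^{-2i\operatorname{Im}(u\cdot\overline z)/t}W_z^t$ correctly includes the factor $i$, which is missing in the paper's statement of Lemma~2.1(2) (a typo in the paper). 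Finally, you could simplify the net argument by choosing $u_k = \tfrac{\pi t}{2|z|^2}iz + kz$, for which the phase is identically $-1$, so no limiting of the phase is required, only the passage to a subnet convergent in $\mathcal M$.
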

\begin{proof}
The proof is an adaptation of \cite[Theorem 2.3]{Zhao_Zheng2014} for essential positivity.

We assume the contrary, i.e.\ we assume that $\liminf_{|w| \to \infty} \widetilde{f}(w) \geq 0$ would imply essential positivity for every real-valued $f \in \operatorname{BUC}(\mathbb C^n)$. Consider such real-valued $f$. Then, letting $C := \limsup_{|w|\to \infty} |\widetilde{f}(w)| \in [0, \infty)$, we have
\begin{align*}
\liminf_{|w|\to \infty} (C \mp f)^{\sim}(w) \geq 0.
\end{align*}
Here, $(C \mp f)^{\sim}$ denotes the Berezin transform of the functions $C \mp f$. By assumption, we therefore obtain that $T_{C \mp f}^t = CI \mp T_f^t$ is essentially positive, hence:
\begin{align*}
\sigma_{ess}(T_f^t) \subset [-C, C].
\end{align*}
Since $T_f^t$ is self-adjoint (as $f$ is assumed real-valued) and clearly it holds true that $\limsup_{|w|\to \infty} |\widetilde{f}(w)| \leq C$, this yields:
\begin{align*}
\| T_f^t\|_{ess} \leq C = \limsup_{|w|\to \infty} |\widetilde{f}(w)|
\end{align*}
for any real-valued $f \in \operatorname{BUC}(\mathbb C^n)$. If we now let $g \in \operatorname{BUC}(\mathbb C^n)$ arbitrary (i.e.~without the assumption of real-valuedness), we have
\begin{align*}
\widetilde{g} = \widetilde{\re(g)} + \widetilde{i\im(g)} = \re(\widetilde{g}) + i\im(\widetilde{g}),
\end{align*}
which gives
\begin{align*}
\limsup_{|w|\to \infty} |\widetilde{\re(g)}(w)|, ~\limsup_{|w|\to \infty} |\widetilde{\im(g)}(w)| \leq \limsup_{|w|\to \infty} |\widetilde{g}(w)|.
\end{align*}
This now implies:
\begin{align*}
\| T_g^t\|_{ess} &\leq \| T_{\re(g)}^t\|_{ess} + \| T_{\im(g)}^t\|_{ess}\\
&\leq \limsup_{|w|\to \infty} |\widetilde{\re(g)}(w)| + \limsup_{|w|\to \infty} |\widetilde{\im(g)}(w)|\\
&\leq 2\limsup_{|w|\to \infty} |\widetilde{g}(w)|.
\end{align*}
Since this would hold true for any $g \in \operatorname{BUC}(\mathbb C^n)$, this violates the previous lemma.
\end{proof}
In light of this result, it would be surprising if the conjecture of Per\"{a}l\"{a} and Virtanen would turn out to hold true: Our discussion has shown that it is not the assumption of radiality, but the implicit assumption of vanishing oscillation which yields their result \cite[Theorem 9]{Perala_Virtanen2023}. Nevertheless, we cannot entirely rule out the possibility that some strange effects show up under the additional assumption of radiality. Inspecting the previous arguments, it turns out that to disprove the conjecture, one needs to find a sequence of radial functions $f_k \in L^\infty(\mathbb C^n)$ such that 
\begin{align*}
\frac{\| T_{f_n}^t\|_{ess}}{\limsup_{|w|\to \infty} \widetilde{f_n}(w)} \to 0, \quad n \to \infty.
\end{align*}
This would then imply a version of Lemma \ref{lem:nobound} for radial symbols, which in turn would (by arguments analogous to the previous result) show that the conjecture is false.

We consider the special case $n = 1$ and $t = 2$. When diagonalizing the radial operator by the standard orthonormal basis, the task of finding such radial functions $f_k \in L^\infty(\mathbb C)$ is equivalent to the problem of finding a sequence of functions $f_k \in L^\infty([0, \infty))$ such that we have
\begin{align*}
\frac{\limsup_{m \to \infty} \frac{1}{m! 2^m} \left | a_{m,k} \right |}{\limsup_{s \to \infty}  \left| \sum_{\nu = 0}^\infty \frac{s^{2\nu}}{(\nu! 2^\nu)^2}e^{-\frac{s^2}{2}} a_{m,\nu} \right| } \overset{k \to \infty}{\longrightarrow} 0,
\end{align*}
where
\begin{align*}
a_{m,k} = \int_0^\infty f_k(r) e^{-\frac{r^2}{2}} r^{2m+1}dr.
\end{align*}
\begin{rem}
The methods of limit operators is also available for the Bergman space of the disc or even bounded symmetric domains, cf.~\cite{Hagger2017, Hagger2019}. There, the algebra $\mathcal C_1$ has to be replaced by the algebra of band-dominated operators. One obtains that a self-adjoint band-dominated operator $A$ is essentially positive if and only if all of its limit operators are positive. Improvements of this result are available, in similar ways, when the Berezin transform of the operator is of vanishing oscillation, cf.\ \cite[Theorem 36]{Hagger2019}.

Further, note that reasoning similar to Lemma \ref{lem:nobound} and Proposition \ref{prop:nopos} work on the Bergman space. Hence, without the assumption of radiality, the conjecture of Per\"{a}l\"{a} and Virtanen is wrong also on the Bergman space.
\end{rem}

\bibliographystyle{amsplain}
\bibliography{References}

\bigskip

\noindent
Robert Fulsche\\
\href{fulsche@math.uni-hannover.de}{\Letter fulsche@math.uni-hannover.de}
\\

\noindent
Institut f\"{u}r Analysis\\
Leibniz Universit\"at Hannover\\
Welfengarten 1\\
30167 Hannover\\
GERMANY

\end{document}